\theoremstyle{plain}
\newtheorem{theorem}{Theorem}[section]
\newtheorem{lemma}[theorem]{Lemma}
\newtheorem{corollary}[theorem]{Corollary}
\newtheorem{proposition}[theorem]{Proposition}
\theoremstyle{definition}
\newtheorem{definition}[theorem]{Definition}
\newtheorem{remark}[theorem]{Remark}
\numberwithin{equation}{section}
\newcommand{\mailto}[1]{\href{mailto:#1}{\nolinkurl{#1}}}
\newcommand{\Z}{\mathbb{Z}}
\newcommand{\R}{\mathbb{R}}
\newcommand{\pr}{\operatorname{P}}
\newcommand{\abs}[1]{\left| #1 \right|}
\newcommand{\normtv}[1]{||#1||_{\rm TV}}
\begin{document}

\title{Geometric juggling with $q$-analogues}

\author{
 Alexander Engström\thanks{
 Postal address:
 Department of Mathematics and Systems Analysis,
 Aalto University, PO Box 11100, 00076 Aalto, Finland.
 Tel: +358 50 562 8761
 URL: \url{http://math.aalto.fi/en/people/alexander.engstrom} \quad
 Email: \protect\mailto{alexander.engstrom@aalto.fi}}
 \and
 Lasse Leskelä\thanks{
 Postal address:
 Department of Mathematics and Systems Analysis,
 Aalto University, PO Box 11100, 00076 Aalto, Finland.
 Tel: +358 50 434 9947
 URL: \url{http://math.aalto.fi/en/people/lasse.leskela} \quad
 Email: \protect\mailto{lasse.leskela@aalto.fi}}
 \and
 Harri Varpanen\thanks{
 Postal address:
 Department of Mathematics and Systems Analysis,
 Aalto University, PO Box 11100, 00076 Aalto, Finland.
 Tel: +358 50 593 6964.
 URL: \url{http://math.aalto.fi/en/people/harri.varpanen} \quad
 Email: \protect\mailto{harri.varpanen@aalto.fi}}
}
\date{\today}

\maketitle

\begin{abstract}
We derive a combinatorial stationary distribution for bounded juggling
patterns with a random, $q$-geometric throw distribution.
The dynamics are analyzed via rook placements on staircase
Ferrers boards, which leads to a stationary distribution
containing $q$-rook polynomial coefficients and $q$-Stirling numbers
of the second kind. We show that the stationary probabilities
of the bounded model can be uniformly approximated with the
stationary probabilities of a corresponding unbounded model.
This observation leads to new limit formulae for $q$-analogues.

Keywords: juggling pattern; $q$-Stirling number;
Ferrers board; Markov process; combinatorial stationary distribution
\end{abstract}

\section{Introduction}
\label{sec:intro}

The mathematical modeling of juggling patterns started around 1980
when personal computers gained popularity and scholars envisioned juggling robots
and simulators. Shannon \cite{Shannon_1980} is
probably the first mathematical manuscript on juggling, with emphasis
on robotics.

The so-called siteswap juggling model was independently invented by several people in the early
1980s. It was first published by Magnusson and Tiemann \cite{Magnusson_Tiemann_1989}
and further developed by Buhler, Eisenbud, Graham and Wright \cite{Buhler_et_al_1994}. 
Siteswap juggling patterns are defined as permutations $f \colon \Z \to \Z$
with $f(t) \ge t$ for each $t \in \Z$, the interpretation being that the pattern is eternal,
and
a particle thrown at time $t$ is next thrown at a future time $f(t)$. The number $f(t)-t$ is
called the {\em throw} at time $t$, and the interpretation of a zero-throw is that the juggler
has nothing to throw and waits. The name siteswap stems from the fact that any pattern can be
generated from a constant-throw pattern by transpositions, i.e., by swapping the ``destination sites''
of two particles.

Ehrenborg and Readdy \cite{Ehrenborg_Readdy_1996} pointed out a connection between affine Weyl groups and periodic
siteswap patterns. Knutson, Lam and Speyer \cite{Knutson_Lam_Speyer_2011} have recently continued along a related line,
indexing positroid varieties in the Grassmannian by periodic siteswap patterns.
For recent developments on the combinatorics of periodic siteswap patterns
see e.g. \cite{Butler_Graham_2010, Chung_et_al_2010, Chung_Graham_2008, Hyatt_2013}.

Warrington \cite{Warrington_2005} deviated from the realm of deterministic 
patterns and calculated the stationary distribution for patterns with bounded, uniformly distributed, random throws.
Two of the authors \cite{Leskela_Varpanen_2012} continued
in an unbounded random setting, showing that a large class of random patterns is ergodic.
They also obtained an explicit stationary distribution for unbounded random
patterns with a geometric throw distribution. 

In this paper we derive an explicit stationary distribution for
random patterns with bounded throws distributed geometrically with a parameter $q \in (0,1)$.
Our state space consists of sets $B \subset \{0,\ldots,m-1\} \subset \Z_+$ that
are related to siteswaps $f \colon \Z \to \Z$ via
\[
B = B_t = \{x-t \ \vert \  x \ge t \text{ and } f^{-1}(x) < t\}.
\]
That is, the set $B$ consists of future throwing times (or destination sites) of the particles relative to the moment just before the current
time $t$. We say that the states $B$ contain {\em heights} of the particles.

We show that the stationary probability of having particles at heights $B \subset \Z_+$
is proportional to
\begin{equation}
\label{eq:bdd_equilibrium}
\prod_{x \in B} [\ 1 + v_B(x)\ ]_q \ q^x,
\end{equation}
where
\[
v_B(x) = \bigl| \{x, \ldots, m-1\}\setminus B \bigr|
\]
is the number of vacant slots above $x$, and $[k]_q = (1-q^k)/(1-q)$ denotes the $q$-analogue of the integer $k$. This formula is a
natural interpolation of the corresponding stationary distributions $\prod_{x \in B} q^x$ of the unbounded
system \cite{Leskela_Varpanen_2012} and $\prod_{x \in B} \ \bigl( 1+ v_B(x) \bigr)$
of the bounded system with uniformly distributed throw heights \cite{Warrington_2005}. Note that
the product term in \eqref{eq:bdd_equilibrium} occurs as a $q$-rook polynomial coefficient;
see \eqref{eq:rook_connection} and \cite[eq.\ (21)]{Haglund_1998}. Indeed, the key
ingredient in our proof is to extend the juggling dynamics and view the extended states as rook
placements on staircase Ferrers boards.

We also show that the stationary distribution of the bounded geometric pattern
with large $m$ can be uniformly approximated with the Gibbs measure
of the unbounded system considered in
\cite{Leskela_Varpanen_2012}. The proof utilizes the ultrafast mixing
property of the unbounded geometric system along with a stochastic coupling
argument.

As an application we can easily obtain the following types of convergence
formulae for $q$-analogues and their related $q$-Stirling numbers.
For any $n \ge 1$,
\[
\lim_{m\to\infty}
Z^{-1}\bigl[ m - n + 1 \bigr]_q^n
= (q;q)_n,
\]
where $(q;q)_n = \prod_{k=1}^n(1-q^{k})$ is the $q$-Pochhammer symbol, and
\begin{equation}
\label{Z}
Z = Z(m,n,q) = q^{-n + {m+1 \choose 2}}S_{1/q}[m+1,m-n+1].
\end{equation}
Here $S_q[a,b]$ denotes the $q$-Stirling number of the second kind,
defined by the recursion
\begin{equation}
\label{qstir}
S_q[a+1,b] = q^{b-1}S_q[a,b-1] + [b]_qS_q[a,b] \qquad ( 0 \le b \le a)
\end{equation}
with the initial conditions $S_q[0,0] = 1$ and $S_q[a,b] = 0$ for $b < 0$ or
$b > a$. We note that there are many $q$-Stirling numbers and \eqref{qstir} is
only one of several definitions.

Further by letting $m,n \to \infty$ so
that $m - n \to \infty$, we obtain
\[
\lim_{m,n\to\infty}
Z^{-1}\bigl[ m - n + 1 \bigr]_q^n
= \phi(q),
\]
where $\phi(q) = \prod_{k=1}^\infty(1-q^k)$ is the Euler function and
$Z$ is as in \eqref{Z} above.

Ayyer, Bouttier, Corteel and Nunzi \cite{ABCN} have recently extended several results of this
paper.
Probabilistic formulae involving $q$-combinatorics have also been found in the
context of birth processes related to the number of sources and paths in
directed random graphs \cite{Crippa_Simon_1997} as well as approximate counting \cite{Louchard_Prodinger_2008}. Finally, $q$-Stirling numbers of the second kind
have also appeared in connection with periodic siteswap patterns \cite{Ehrenborg_Readdy_1996}.

\section{Bounded juggler's exclusion process}
\label{sec:combinatorics}

A generic model for random juggling patterns called the \emph{juggler's exclusion
process} (JEP) was recently introduced by Leskelä and Varpanen in~\cite{Leskela_Varpanen_2012}.
The bounded JEP with $n$ particles and $m \ge n$ admissible particle heights $\{0, \ldots, m-1\}$ is a
random sequence $(X_0,X_1,\dots)$ of $n$-element subsets of $\{0, \ldots, m-1\}$ such that:
\begin{itemize}
  \item $X_{t+1} = X_t - 1$ when $0 \notin X_t$. (All particles fall down by one position when the hand is empty).
  \item $X_{t+1} = X_t^* \cup \{ \eta_t \}$ when $0 \in X_t$, where $X_t^* = (X_t \setminus 0) - 1$ and $\eta_t$ is a
  random integer in $\{0, \ldots, m-1\} \setminus X_t^*$. (The particle at hand is thrown into height $\eta_t$, not allowing collisions with the other particles.)
\end{itemize}

Analytically, the most interesting special case of the bounded model
with $n$ particles and $m \ge n$ admissible heights is the \emph{bounded geometric JEP}, where the
throw heights $\eta_t$ are defined as follows. Denote by $\ell = m-n+1$
the number of vacant throw heights, and let $(\xi_1, \xi_2, \ldots)$ be a sequence of independent
random integers on $\{0 \ldots \ell-1\}$ each having an $\ell$-truncated geometric distribution with
parameter $q \in (0,1)$ so that
\begin{equation}
 \label{eq:Truncated}
 \pr( \xi_t = x )
 \ = \ \left( \frac{1-q}{1-q^{\ell}} \right) q^x,
 \quad x \in \{0, \ldots \ell-1\}.
\end{equation}
Let $A \subset \Z_+$ be finite and denote by $\theta_A$ the order-preserving bijection from $\Z_+$
onto $\Z_+ \setminus A$. Then the random throw heights $\eta_t$ in the bounded geometric JEP are
given by
\[
 \eta_t = \theta_{X_t^*}(\xi_t).
\]
By \cite[Thm. 2.1]{Leskela_Varpanen_2012} we know that 
the bounded geometric JEP with $n$ particles and $m$ heights
is ergodic and characterized by a unique
stationary probability distribution. We denote the 
stationary probabilities by $\pi_{m,n,q}(B)$,
$B \in \Z_+^{(n)} = \{A \subset \Z_+: |A|=n\}$.

\subsection{A combinatorial formula for the stationary distribution}
\label{sec:bounded_equilibrium}
The probability transition matrix $P$ of an $n$-particle JEP
acts on probability distributions $\mu$ on $\Z_+^{(n)}$ according to $\mu \mapsto \mu P$
as follows. Denote $B = \{i_1,\ldots,i_n\} \in \Z_+^{(n)}$ and
$B+1 = \{i_1 + 1,\ldots, i_n+1\}$. At each step the juggler either throws one of the particles or does nothing.
Therefore
\[
\mu P(B) = \mu(B+1) + \sum_{k=1}^n \mu\bigl(\{0\} \cup (B \setminus \{i_k\}+1)\bigr)h^{B\setminus \{i_k\}}(i_k),
\]
where $h^{B\setminus\{i_k\}}(i_k)$ is the probability of throwing the particle $i_k \in B$ while the
other particles shift down from $B+1$ to $B$. Hence the problem of finding a stationary distribution amounts to finding
a probability distribution $\pi$ on $\Z_+^{(n)}$ such that
\begin{equation}
\label{eq:general_balance}
\pi(B) = \pi(B+1) + \sum_{k=1}^n \pi\bigl(\{0\} \cup (B \setminus \{i_k\}+1)\bigr)h^{B\setminus \{i_k\}}(i_k).
\end{equation}
In \cite{Leskela_Varpanen_2012} a solution was found in the one-particle
general case as well as in the unbounded geometric case, and in
\cite{Warrington_2005} a solution was found in the bounded case with
uniform throws. Solving the equation in the general unbounded case seems
hard, if not impossible. Obtaining a closed-form solution in the general
bounded case seems difficult.

\begin{theorem}
\label{the:bounded_equilibrium}
In the bounded geometric JEP the stationary distribution is
\begin{equation}
\label{eq:bounded_equilibrium}
\pi_{m,n,q}(B) = Z^{-1} \prod_{x \in B}\bigl[\ 1 + v_B(x) \ \bigr]_q \ q^x,
\end{equation}
where
$
v_B(x) = \bigl| \{x, \ldots, m-1\}\setminus B \bigr|
$
and $Z = Z(m,n,q)$ is as in \eqref{Z}.
\end{theorem}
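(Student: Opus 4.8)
The plan is to avoid a direct assault on the balance equation \eqref{eq:general_balance} --- which is awkward because the quantities $v_B(x)$ transform in an unwieldy way under the substitutions $B\mapsto B+1$ and $B\mapsto\{0\}\cup(B\setminus\{i_k\}+1)$ appearing there --- and instead to realize the bounded geometric JEP as the image of a larger Markov chain on non-attacking rook placements of a staircase Ferrers board. The starting observation is that the $\ell$-truncated geometric throw \eqref{eq:Truncated}, landing in the $j$-th vacant slot from the bottom with probability $\tfrac{1-q}{1-q^{\ell}}q^{\,j}$, is exactly what one obtains by scanning the vacant slots from the bottom and at each one independently stopping with a $q$-biased coin. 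Recording at every throw the outcomes of these coins turns the random landing site into a deterministic function of auxiliary coordinates, and the enlarged configuration --- occupied heights together with this extra data --- is naturally encoded as a non-attacking placement of $n$ rooks on the staircase board with columns indexed by $\{0,\dots,m-1\}$.

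First I would make this precise by defining the extended chain $(Y_t)$ on such rook placements so that (i) the occupied-columns map $\Phi$ carries $(Y_t)$ onto the bounded geometric JEP $(X_t)$, with the JEP transition probabilities recovered \emph{exactly} by projection (strong lumpability), and (ii) at a throw step the new rook enters the unique free staircase column by a rule whose $q$-weights are transparent. I would then guess the stationary law of $(Y_t)$ to be $\nu(Y)\propto q^{\operatorname{stat}(Y)}\prod_{x}q^{x}$, with $\operatorname{stat}$ the standard rook statistic (in the convention of \cite{Haglund_1998}), and verify that $\nu$ solves the balance equation of the extended chain. Because the extended throw inserts a single rook into a prescribed column, this reduces to bookkeeping on how $\operatorname{stat}$ changes under one rook move, rather than the tangled $q$-identity that \eqref{eq:general_balance} would demand directly.

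Given $\nu$, lumpability yields
\[
\pi_{m,n,q}(B)=\nu\bigl(\Phi^{-1}(B)\bigr)=Z^{-1}\!\!\sum_{Y:\,\Phi(Y)=B}\!\! q^{\operatorname{stat}(Y)}\prod_{x\in B}q^{x}.
\]
The fibre $\Phi^{-1}(B)$ runs over all matchings of the $n$ occupied columns to admissible rows, and summing $q^{\operatorname{stat}}$ over it factorizes column by column: the $x$-rook can occupy any of $1+v_B(x)$ rows above height $x$, contributing the geometric weight $1+q+\cdots+q^{v_B(x)}=[\,1+v_B(x)\,]_q$, so the fibre sum is $\prod_{x\in B}[\,1+v_B(x)\,]_q\,q^{x}$. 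This is exactly the $q$-rook-polynomial coefficient appearing in \eqref{eq:rook_connection}, cf.\ \cite[eq.\ (21)]{Haglund_1998}, and it establishes \eqref{eq:bounded_equilibrium} up to the normalizer. Finally, $Z=\sum_{B}\prod_{x\in B}[\,1+v_B(x)\,]_q\,q^{x}$ is the total weight, i.e.\ the $q$-rook polynomial of the staircase board with $n$ rooks, which by the classical Garsia--Remmel--Haglund identification of staircase $q$-rook polynomials with $q$-Stirling numbers of the second kind equals, after the substitution $q\mapsto 1/q$ and the power-of-$q$ renormalization, the expression $q^{-n+\binom{m+1}{2}}S_{1/q}[m+1,m-n+1]$ of \eqref{Z}.

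The step I expect to be the real obstacle is the second one: engineering the extended chain so that it is simultaneously a strict lumping of the JEP \emph{and} carries the clean $q$-weighted stationary law, together with correctly tracking $\operatorname{stat}$ across a throw. Once the extension is set up properly, the push-forward computation and the $q$-Stirling evaluation of $Z$ are essentially known combinatorial identities.
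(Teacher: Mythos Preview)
Your plan is correct and is essentially the paper's own proof: the authors likewise pass to an extended chain of $n$ non-attacking rooks on the staircase board $S_{m+1}$, show it is irreducible/aperiodic, verify that $\mu(C)\propto q^{-\operatorname{circ}(C)}$ (their bespoke rook statistic, a variant of Garsia--Remmel's) solves the extended balance equation, and then sum $q^{-\operatorname{circ}}$ over the fibre of a fixed $B$ to obtain $\prod_{x\in B}[\,1+v_B(x)\,]_{1/q}$, with the normalizer identified as a Gould/$q$-Stirling number. The only cosmetic difference is that they work in $1/q$ throughout and convert via $[k]_q=q^{k-1}[k]_{1/q}$ at the very end, whereas you fold the $\prod q^x$ into the weight from the start; your anticipated ``real obstacle'' is exactly their Lemma showing \eqref{eq:ext_solution} satisfies \eqref{eq:ext_equilibrium}.
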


Theorem \ref{the:bounded_equilibrium} readily allows to derive analytical formulas for various interesting characteristics of the
system in steady state. For example, the stationary probabilities of the ground state $\{0,\ldots,n-1\}$ and the highest-energy state
$\{m-n, \ldots, m-1\}$ are
\[
Z^{-1} [m-n+1]_q^n q^{n \choose 2} \ \text{ and } \ Z^{-1}q^{nm-{n+1\choose 2}},
\]
respectively. The probability that
zero is occupied, i.e., the long-run fraction of time slots at which the juggler performs a throw, is 
\[
  \frac{Z(m-1,n-1,q)}{Z(m,n,q)} [m-n+1]_q.
\]

We prove Theorem \ref{the:bounded_equilibrium} by considering an extended process where
the states are staircase Ferrers boards with $m+1$ columns and with $n$ non-attacking rooks
and where the dynamics is best introduced by an example.
Note that in the literature \cite{Garsia_Remmel_1986,Stanley_1986}
a staircase Ferrers board contains a void column; thus a board with $m+1$ columns refers to a board with highest column length $m$. In the example below we have $m=6$ (board with $7$ columns)
and $n=3$:
\begin{figure}[h!tbp]
\label{fig1}
\begin{center}
 \includegraphics{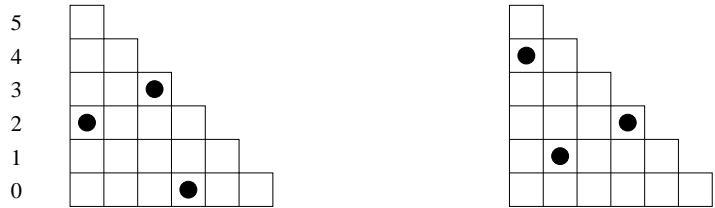}
\end{center}
\caption{An extended state and one of its followers.}
\end{figure}
Figure 1 depicts one extension of the situation where the non-extended state $\{0,2,3\}$ is followed by the
non-extended state $\{1,2,4\}$.
In the extended dynamics the particles drift downwards in a southeast direction, and the particles are
thrown from the bottom row to the leftmost column. The idea
 is to keep track not only of the remaining flight time (vertical axis) but also of the elapsed flight time
(horizontal axis) of the particles. The ``non-attacking rook'' model is
clearly implied by the exclusion rule that no two particles may collide.
The concept of an extended state already appeared in
\cite{Warrington_2005}, albeit in a different form.

\begin{proposition}
The extended process has a unique stationary distribution.
\end{proposition}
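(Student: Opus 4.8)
The plan is to reduce the statement to the standard theory of finite Markov chains, since the extended process is defined on a finite state space. First I would make the state space precise: an extended state is a placement of $n$ non-attacking rooks on the staircase Ferrers board with $m+1$ columns, which is a finite set. I would describe the one-step dynamics explicitly — the deterministic southeast drift of all rooks (decreasing remaining flight time while increasing elapsed flight time), together with the random re-insertion into the leftmost column governed by the $\ell$-truncated geometric law of \eqref{eq:Truncated} through the order-preserving relabelling $\theta$. This makes the extended process a time-homogeneous Markov chain on a finite state space, so existence of a stationary distribution is automatic from compactness/Perron--Frobenius; the real content is \emph{uniqueness}, which amounts to showing the chain has a single closed communicating class (equivalently, is irreducible on its essential states, or has a unique recurrent class).

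The key step is therefore an accessibility argument. I would fix a distinguished ``fully collapsed'' extended state — the one whose rooks sit in the bottom row in the leftmost $n$ columns, which is the natural lift of the juggler's ground state $\{0,\dots,n-1\}$ — and show that (i) from every extended state this distinguished state is reachable with positive probability, and (ii) from the distinguished state every extended state is reachable. For (i), observe that repeated application of the dynamics pushes elapsed flight time up and remaining flight time down; because the board has bounded height $m$, after finitely many steps all rooks have been forced through the bottom row and re-thrown, and by choosing the random throws to always pick the lowest available slot (which has positive probability under the truncated geometric distribution since $q\in(0,1)$ makes every value in $\{0,\dots,\ell-1\}$ have positive mass) one lands in the collapsed configuration. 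For (ii), from the collapsed state one builds up any target rook placement by a suitable finite sequence of throws, again each of positive probability. Since the excerpt already invokes \cite[Thm.~2.1]{Leskela_Varpanen_2012} for ergodicity of the (non-extended) bounded geometric JEP, I would alternatively phrase (i)--(ii) as: the extended chain projects onto the non-extended JEP, the non-extended chain is irreducible, and the fibre over the ground state is a single extended state, so irreducibility lifts.

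The main obstacle I anticipate is bookkeeping rather than conceptual: one must check that the extended dynamics is genuinely well-defined (the southeast drift never pushes a rook off the board before it reaches the bottom row, and the re-insertion via $\theta_{X_t^*}$ always lands on an admissible cell so that rooks remain non-attacking), and that the projection to the non-extended process is exactly the bounded geometric JEP of Section~\ref{sec:combinatorics}. Once the chain is confirmed to be a finite irreducible Markov chain, uniqueness of the stationary distribution is the standard theorem. I would also remark that finiteness plus irreducibility gives positive recurrence and hence a genuine (non-defective) stationary \emph{probability} distribution, which is what is claimed.
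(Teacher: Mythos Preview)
Your overall strategy---finite state space plus irreducibility via a distinguished state reachable from everywhere and from which everything is reachable---is exactly the paper's approach. However, your choice of distinguished state is wrong: ``rooks sit in the bottom row in the leftmost $n$ columns'' is not even a legal configuration for $n\ge 2$, since those rooks would attack one another along the row. The non-extended ground state $\{0,\dots,n-1\}$ has particles at \emph{heights} $0,\dots,n-1$, i.e.\ one rook in each of the bottom $n$ rows, not $n$ rooks in one row. The paper takes as distinguished state the configuration with the $n$ rooks on the $n$-diagonal of the staircase, and your argument (i)--(ii) goes through verbatim once you use that state instead.

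Your alternative projection argument also has a gap: the fibre over the non-extended ground state is \emph{not} a single extended state, since the column (elapsed flight time) of each rook is free. Irreducibility of the projected chain does not by itself lift to the extension. What does work is that repeated lowest-available throws from any extended state eventually cycle every particle through the leftmost column and onto the diagonal; this is precisely the content of (i), so the projection shortcut adds nothing. Finally, the paper also records aperiodicity (positive-probability self-loop at the extended ground state); this is not needed for uniqueness of the stationary distribution on a finite irreducible chain, but it is worth noting since the paper implicitly uses convergence to stationarity later.
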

\begin{proof}
It suffices to prove that the extended process is irreducible and aperiodic. This is done as in \cite[Lemma 2.1]{Leskela_Varpanen_2012}.
We define the extended ground state as having the $n$ particles in the $n$-diagonal of the staircase, as depicted in Figure 2.
The extended ground state can be reached from any extended state, including itself, by always throwing to the lowest available height. 
It is also easy to see that any extended state can be reached from the extended ground state by a suitable sequence of throws.
Aperiodicity follows from the fact that there is a strictly positive probability to remain in the extended ground state.
\begin{figure}[h!tbp]
\label{fig:ext_ground}
\begin{center}
\includegraphics{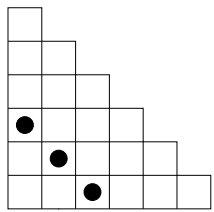}
\end{center}
\caption{The extended ground state for $m=6$, $n=3$.}
\end{figure}
\end{proof}

We next introduce a $q$-counting statistic $\text{circ}(C)$ for a configuration $C$ of non-attacking rooks
on a staircase Ferrers board. The statistic is also best introduced by an example
($m=6$, $n=3$):
\begin{figure}[h!tbp]
\label{fig2}
\begin{center}
 \includegraphics{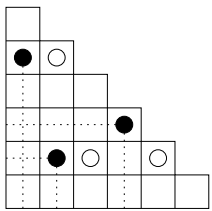}
\end{center}
\caption{The circ statistic.}
\end{figure}
Given a non-attacking rook configuration $C$, we disable all positions which occur in the same row and column of a rook which are directly to the left or directly below a rook, and place circles to the remaining positions in those rows that have a rook.
Then $\text{circ}(C)$ counts the total number of circles in $C$. In Figure 3 we have $\text{circ}(C) = 3$. We note that the statistics $\text{circ}(C)$
is not really new. It is a modified version of the statistic appearing in \cite{Garsia_Remmel_1986}, where circles are allowed also in
unoccupied squares in rows with no rooks.

\begin{definition}
In what follows, we denote by $C_n(S_{m+1})$ the set of all configurations of $n$ non-attacking rooks on a staircase Ferrers board $S_{m+1}$
with $m+1$ columns.
\end{definition}

\begin{lemma}
\label{lem:circ_sum}
\begin{equation*}
\sum_{C \in C_n(S_{m+1})} q^{\rm{circ}(C)} = G_q[m+1,m-n+1],
\end{equation*}
where the numbers $G_q[a,b]$ are defined by the recursion formula
\begin{equation}
\label{eq:Gould_def}
G_q[a+1,b] = G_q[a,b-1] + [b]_qG_q[a,b],
\end{equation}
where $0 \le b \le a$, $G_q[0,0]=1$ and $G_q[a,b] = 0$ for $b < 0$ or $b > a$.
\end{lemma}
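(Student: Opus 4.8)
The plan is to show that the left-hand side, which I will write as $f(m,n)=\sum_{C\in C_n(S_{m+1})}q^{\text{circ}(C)}$, satisfies exactly the recursion and boundary conditions that define $G_q[m+1,m-n+1]$, and then to conclude by induction on $m$ (simultaneously over all admissible $n$). Substituting $a=m+1$ and $b=m-n+1$ in \eqref{eq:Gould_def}, the identity to be established reads
\[
f(m,n)=f(m-1,n)+[m-n+1]_q\,f(m-1,n-1).
\]
For the boundary data: the empty placement has $\text{circ}=0$, so $f(m,0)=1=G_q[m+1,m+1]$; and $S_{m+1}$ admits no configuration of more than $m$ non-attacking rooks (the void column holds none), so $f(m,n)=0=G_q[m+1,m-n+1]$ whenever $n>m$. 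The remaining degenerate values follow the same way.

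For the inductive step I would split $C_n(S_{m+1})$ according to whether the bottom row of the staircase carries a rook. The bottom row meets every non-void column, and deleting it replaces $S_{m+1}$ by a copy of $S_m$ with one extra void column adjoined, which carries the same rook placements as $S_m$ and preserves all the ``to the left of'' and ``below'' relations between cells. If the bottom row is empty, the placement already lies on $S_m$ and one gets a bijection onto $C_n(S_m)$; since circles occur only in rows containing a rook, the deleted row was circle-free, so $\text{circ}$ is unchanged and this part contributes $f(m-1,n)$. If the bottom row carries a rook, the remaining $n-1$ rooks form an arbitrary element of $C_{n-1}(S_m)$, while the bottom-row rook may be placed in any of the $m-(n-1)=m-n+1$ non-void columns not used by those rooks; it then remains to show that the total $q^{\text{circ}}$-weight of these $m-n+1$ completions of a fixed reduced configuration equals $[m-n+1]_q$ times the weight of that reduced configuration.

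The hard part will be this last bookkeeping of the circ statistic, and it splits into two checks. First, a rook in the bottom row lies strictly below every other rook, so the vertical ``below a rook'' rule never triggers on its account and it only disables cells of the bottom row, which were uncircled to begin with; hence $\text{circ}$ is additive over the split and the circles of the higher rows are untouched. Second, the circles created by the bottom-row rook itself are exactly the cells of the bottom row strictly to its right that do not sit below another rook, and a short telescoping count shows that this number runs through $\{0,1,\dots,m-n\}$ as the rook's column ranges over the $m-n+1$ admissible positions; summing $q$ to these powers yields $1+q+\dots+q^{m-n}=[m-n+1]_q$. Carrying this out correctly depends on the precise definition of $\text{circ}$, and in particular on the fact that circles are confined to rows containing a rook --- unlike the Garsia--Remmel statistic; it is exactly this confinement that makes the recursion come out in the form \eqref{eq:Gould_def}, without the extra factor $q^{b-1}$ that appears in the $q$-Stirling recursion \eqref{qstir}.
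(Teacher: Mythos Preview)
Your proposal is correct and follows essentially the same route as the paper: split $C_n(S_{m+1})$ according to whether the bottom row carries a rook, observe that the circ statistic is unchanged on the higher rows, and count that placing the bottom-row rook in the $i$th available cell from the right contributes $i-1$ new circles, yielding the recursion $f(m,n)=f(m-1,n)+[m-n+1]_q\,f(m-1,n-1)$, which matches \eqref{eq:Gould_def}. Your write-up is in fact more detailed than the paper's, which simply refers to \cite[Thm.~1.1]{Garsia_Remmel_1986} and records the recursion; the substance is the same.
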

\begin{proof}
The proof is identical to that of \cite[Thm. 1.1]{Garsia_Remmel_1986}.
A configuration in $C_n(S_{m+1})$ is obtained by placing either all the $n$ rooks in rows other
than the bottom row in $S_{m+1}$ or by placing one of the rooks in the bottom row. In the first case there are
no additional circles compared to the corresponding configuration in $C_n(S_m)$.

In the second case, there are $m-n+1$ possible locations in the bottom row to place a new rook. If the new rook is placed in the $i$th
available square, counting from right to left,
there are $i-1$ additional circles compared to the corresponding configuration in $C_{n-1}(S_m)$.
Denoting
\[
K_q[n,m+1] = \sum_{C \in C_n(S_{m+1})} q^{\text{circ}(C)}
\]
and summing over all configurations in $C_n(S_{m+1})$ we obtain
\begin{equation}
\label{eq:K_recursion}
K_q[n,m+1] = K_q[n,m] + [m-n+1]_qK_q[n-1,m],
\end{equation}
because there is a total of $m-n+1$ available squares in the second case considered above. Thus
\[
G_q[m+1,m+1-n] = K_q[n,m+1]
\]
by comparing \eqref{eq:K_recursion} with \eqref{eq:Gould_def}.
\end{proof}

The numbers $G_q[a,b]$ are called Gould $q$-Stirling numbers of the second kind
 \cite{Gould_1961} or modified $q$-Stirling numbers of the second kind
\cite{Ehrenborg_2003}. They are related
to the numbers $S_q[a,b]$ by
\begin{equation}
\label{eq:Gould}
 S_q[a,b] = q^{\left(\begin{smallmatrix}b\\2\end{smallmatrix}\right)}G_q[a,b].
\end{equation}

Next consider the stationarity equation for the extended process. Again the juggler either waits or throws one of the particles, so the equation is similar to \eqref{eq:general_balance}. We write it as
\begin{equation}
\label{eq:ext_equilibrium}
\mu(C) = \mu(\widehat{C}) + \sum_{C^*} \mu(C^*)h(C^*,C),
\end{equation}
where $\widehat{C} \in C_n(S_{m+1})$ denotes the unique predecessor of $C \in C_n(S_{m+1})$ when the juggler has
moved from state $\widehat{C}$ to state $C$ by waiting,
$C^* \in C_n(S_{m+1})$ denotes a predecessor of $C$ when the juggler has moved
from state $C^*$ to state $C$ by a throw with probability $h(C^*,C)$, and the sum is taken over all
predecessors of $C$. Note that
$\widehat{C}$ is void if there is a particle in the leftmost column of $C$, and that the
sum is empty if there is no particle in the leftmost column of $C$.
\begin{lemma}
\label{lem:temp}
The equation \eqref{eq:ext_equilibrium} is solved by
\begin{equation}
\label{eq:ext_solution}
\mu(C) = \frac{q^{-\rm{circ}(C)}}{G_{1/q}[m+1,m-n+1]}.
\end{equation}
\end{lemma}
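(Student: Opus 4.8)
The plan is to verify the balance equation \eqref{eq:ext_equilibrium} directly for the candidate $\mu(C)=q^{-\mathrm{circ}(C)}$ and to handle the normalizing constant only at the end. Since \eqref{eq:ext_equilibrium} is linear and homogeneous in $\mu$, it suffices to check it with $q^{-\mathrm{circ}(C)}$ in place of $\mu(C)$; once this is done, Lemma~\ref{lem:circ_sum} applied with $q$ replaced by $1/q$ gives $\sum_{C\in C_n(S_{m+1})}q^{-\mathrm{circ}(C)}=G_{1/q}[m+1,m-n+1]$, so \eqref{eq:ext_solution} is a probability distribution and hence, by the preceding Proposition, the stationary one. Fix $C\in C_n(S_{m+1})$; the verification splits according to whether or not $C$ has a rook in its leftmost column.

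If $C$ has no rook in the leftmost column, then $C$ has no throw-predecessor, so the sum in \eqref{eq:ext_equilibrium} is empty, and $\widehat C$ is the configuration obtained from $C$ by the diagonal ``northwest'' translation that reverses the drifting move. The identity then reduces to $\mathrm{circ}(\widehat C)=\mathrm{circ}(C)$, which I would establish by showing that this same translation maps the circled cells of $C$ bijectively onto those of $\widehat C$: the circling rule refers only to which rook lies in a cell's row and on which side of it, and to whether a rook sits above the cell in its column, and all these relations --- as well as membership in the staircase, since consecutive columns differ in height by one --- are preserved by the translation; since $C$ has no rook in column $0$, its circled cells lie in columns $\ge 1$ and the translation stays on the board.

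If $C$ has a rook in the leftmost column, at vertical height $y$ say, then $\widehat C$ is void and the throw-predecessors of $C$ are exactly the configurations $C^*_a=C'\cup\{p_a\}$, $a=1,\dots,\ell$ with $\ell=m-n+1$, obtained by deleting that rook, translating the remaining $n-1$ rooks one step northwest (call the result $C'$), and inserting a new rook into the $a$-th (counted from the right) of the $\ell$ empty bottom-row cells. Two circ-computations are needed. First, the argument in the proof of Lemma~\ref{lem:circ_sum} gives $\mathrm{circ}(C^*_a)=\mathrm{circ}(C')+(a-1)$. Second, combining the translation-invariance of circ from the first case with the effect of inserting the leftmost-column rook at height $y$ --- which contributes one circle per cell of its row lying to its right and not below another rook --- yields $\mathrm{circ}(C)=\mathrm{circ}(C')+(\ell-1-\xi)$, where $\xi$ is the number of throw-available heights lying below $y$; here one uses that the staircase shape forces every rook $(j,r)$ with $r>y$ to have its cell $(j,y)$ on the board, so that this cell count equals the number of occupied heights above $y$, namely $\ell-1-\xi$. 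Finally, by \eqref{eq:Truncated} the throw probability $h(C^*_a,C)$ equals $\tfrac{1-q}{1-q^{\ell}}q^{\xi}$ for \emph{every} $a$ (it depends on the rank $\xi$ of $y$ among the available heights, not on $a$). Substituting, the required identity becomes $q^{-\mathrm{circ}(C)}=\tfrac{1-q}{1-q^{\ell}}\,q^{\xi}\,q^{-\mathrm{circ}(C')}\sum_{a=1}^{\ell}q^{-(a-1)}$, which holds since $\sum_{a=1}^{\ell}q^{-(a-1)}=q^{1-\ell}[\ell]_q$ and $\tfrac{1-q}{1-q^{\ell}}=[\ell]_q^{-1}$, both sides equalling $q^{-\mathrm{circ}(C')}q^{\xi+1-\ell}$.

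The step I expect to be the main obstacle is the second circ-computation above: tracking how $\mathrm{circ}$ changes under a throw, which is a composite move (remove a bottom-row rook, translate, insert a leftmost-column rook), and extracting the count $\ell-1-\xi$ from the staircase boundary so precisely that it cancels the $q$-weight of the throw and makes all $\ell$ predecessors fit together. The ``no leftmost rook'' case and the normalization via Lemma~\ref{lem:circ_sum} are routine by comparison.
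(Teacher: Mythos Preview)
Your argument is correct and follows essentially the same route as the paper: split into the ``wait'' and ``throw'' cases, use diagonal translation-invariance of $\mathrm{circ}$ for the former, and for the latter decompose the throw as (remove bottom-row rook) $\to$ (translate) $\to$ (insert leftmost-column rook), tracking the change in $\mathrm{circ}$ at each step and summing over the $\ell$ predecessors; the paper parametrizes by $j$ (rank of the landing height from the top) where you use $\xi=\ell-j$, but the computations are identical. One wording slip: the circle count contributed by the leftmost-column rook is the number of \emph{vacant} heights above $y$, not occupied ones --- your formula $\ell-1-\xi$ is the vacant count and is correct, only the word ``occupied'' in your justification should read ``unoccupied''.
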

\begin{proof}
Clearly $\text{circ}(\widehat{C}) = \text{circ}(C)$ when the juggler has waited. If the juggler has thrown
a particle to the $j$th available square, counting from the top down in the leftmost column of $C$, the
throw probability has been $h(C^*,C) = q^{-(j-1)}/[m-n+1]_{1/q}$. Moreover,
removing the particle from the leftmost column of $C$ decreases the
circle count by $j-1$.
If we similarly remove a particle from the $i$th non-attacking square (counting from right to left)
of the bottom row in $C^*$, we decrease the circle count by $i-1$. Summing over the $m-n+1$ predecessors
of $C$ and keeping track of the circle count we obtain
\[
q^{-\rm{circ(C)}} = \sum_{i=1}^{m-n+1}q^{-{\rm circ}(C)-(i-1)+(j-1)}\frac{q^{-(j-1)}}{[m-n+1]_{1/q}}.
\]
Hence the equation \eqref{eq:ext_equilibrium} is solved by $\widetilde{\mu}(C) = q^{-\rm{circ}(C)}$,
and normalization yields the denominator $G_{1/q}[m+1,m-n+1]$ by Lemma
\ref{lem:circ_sum}.
\end{proof}

\begin{lemma}
\label{lem:extensions}
Let $B \subset \{0, \ldots, m-1\}$ with $\abs{B} = n$. Then the following identity holds
\begin{equation}
\label{eq:ext_sum}
 \sum_C q^{-\rm{circ}(C)} = \prod_{x \in B}\bigl[\ 1+\bigl| \{x, \ldots, m-1\} \setminus B \bigr| \ \bigr]_{1/q},
\end{equation}
where the sum is taken over all possible extensions $C \in C_n(S_{m+1})$ of $B$.
\end{lemma}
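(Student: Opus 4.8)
The plan is to build up each extension $C$ of $B=\{i_1<\cdots<i_n\}$ by inserting its rooks one at a time in order of \emph{decreasing} height, i.e.\ from the top row of the staircase downwards, and to show two things: that at each stage the number of positions available to the new rook depends only on $B$ and equals $1+v_B(i_k)$, and that $\text{circ}$ accumulates additively, contributing one summand per rook. Granting these, the sum over extensions factors into a product of $q$-integers, which is exactly \eqref{eq:ext_sum}. Before anything else I would fix coordinates matching the extended dynamics: a rook at height $x$ lies in the row of vertical position $x$ and in the column whose horizontal position equals its elapsed flight time, so it occupies one of the $m-x$ cells with elapsed times $0,1,\dots,m-1-x$, and ``non-attacking'' means the rooks occupy distinct elapsed-time columns.

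Step 1 (counting positions). Suppose the rooks at heights $i_{k+1},\dots,i_n$ have already been placed. A rook at a height $i_j>i_k$ has elapsed time at most $m-1-i_j<m-1-i_k$, so its column lies among the $m-i_k$ candidate columns $\{0,\dots,m-1-i_k\}$ of the rook at height $i_k$; hence the latter has exactly $(m-i_k)-(n-k)=1+v_B(i_k)$ available columns, and this number does not depend on \emph{which} columns the higher rooks received. Writing $a_0<a_1<\cdots<a_{v_B(i_k)}$ for these available columns, sorted by elapsed time, the data recording, for each $k$, the index $s_k$ with the rook at height $i_k$ sitting in column $a_{s_k}$ is a bijection between the extensions of $B$ and the product set $\prod_{k=1}^n\{0,1,\dots,v_B(i_k)\}$; the inverse just replays the construction, and a choice is never blocked because only columns already used by higher rooks are forbidden.

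Step 2 (additivity of $\text{circ}$). Here I would use that a cell of $C$ is disabled only by the unique rook in its own row (the ``directly to the left'' clause) or by a rook strictly above it in its own column (the ``directly below'' clause), and --- as the proof of Lemma~\ref{lem:circ_sum} already uses --- that a rook's own cell never receives a circle. Consequently, inserting the rook at height $i_k$, which lies below every rook inserted before it, changes neither the disabled cells nor the circles of the rows $i_{k+1},\dots,i_n$; the circles it newly creates all lie in its own row, strictly to the right of it, in columns not occupied by a higher rook, and these are exactly $a_{s_k+1},\dots,a_{v_B(i_k)}$, since the rightmost cell $a_{v_B(i_k)}=m-1-i_k$ is always available (no higher rook reaches that column). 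Thus the rook at height $i_k$ contributes $v_B(i_k)-s_k$ circles, so $\text{circ}(C)=\sum_{k=1}^n\bigl(v_B(i_k)-s_k\bigr)$, and transporting $\sum_C q^{-\text{circ}(C)}$ through the bijection of Step~1 yields $\sum_C q^{-\text{circ}(C)}=\prod_{k=1}^n\sum_{s_k=0}^{v_B(i_k)}q^{-(v_B(i_k)-s_k)}=\prod_{k=1}^n[\,1+v_B(i_k)\,]_{1/q}$, as claimed.

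I expect Step~2 to demand the real care: one has to pin down precisely which cells of a fixed configuration carry circles --- in particular the off-by-one convention that a rook placed in the rightmost available column of its row creates no new circle, which is already implicit in the proof of Lemma~\ref{lem:circ_sum} --- and then check that this description of the circled cells of row $i_k$ is genuinely untouched by the later insertion of the lower rooks. Step~1 is essentially bookkeeping with the staircase shape, resting on the single monotonicity $i_j>i_k\Rightarrow m-1-i_j<m-1-i_k$: this forces every higher rook to sit in one of the columns available to the rook at height $i_k$, which is what makes the available count a function of $B$ alone.
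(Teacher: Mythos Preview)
Your proposal is correct and follows essentially the same approach as the paper's proof: build the extension rook by rook in order of height, observe that the rook at height $i_k$ has exactly $1+v_B(i_k)$ admissible columns independent of the earlier choices, and that its contribution to $\text{circ}$ is determined by which of those columns it lands in. The paper phrases this as an induction on $n$ (peeling off $\min B$) and is considerably terser---in particular it leaves the additivity of $\text{circ}$ implicit---whereas you spell out the bijection to $\prod_k\{0,\dots,v_B(i_k)\}$ and verify explicitly that lower rooks do not disturb the circles in higher rows; but the underlying argument is the same.
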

\begin{proof}
We proceed by induction on $n$. If $n=1$, there are $m-x$ possible extensions
$C$ of $B = \{x\} \subset [0,m-1]$, and the sum \eqref{eq:ext_sum} is $1+q^{-1}+\cdots+q^{-m+1+x} = [m-x]_{1/q}$.
When $n \ge 2$ consider $x = \min B$. There are $m-x-(n-1)$ ways of placing a non-attacking rook
to row $x \in S_{m+1}$ after $n-1$ non-attacking rooks have already been placed to higher rows. The claim follows.
\end{proof}

\begin{proof}[Proof of Theorem \ref{the:bounded_equilibrium}]
By Lemmas \ref{lem:temp} and \ref{lem:extensions}  we have
\[
\pi_{m,n,q}(B) = \frac{ \prod_{x \in B}\bigl[\ 1+\bigl| \{x, \ldots, m-1\}\setminus B \bigr| \ \bigr]_{1/q} }{G_{1/q}[m+1,m-n+1]},
\]
i.e., by \eqref{eq:Gould},
\[
\pi_{m,n,q}(B) = \frac{\prod_{x \in B}\bigl[\ 1+\bigl| \{x, \ldots, m-1\}\setminus B \bigr| \ \bigr]_{1/q} }{q^{m-n+1 \choose 2}S_{1/q}[m+1,m-n+1]}.
\]
Denoting $B=\{i_1, \dots i_n\}$ with $i_1 < \cdots < i_n$, we have
\begin{equation}
\label{eq:rook_connection}
\prod_{x \in B}\bigl[\ 1 + \bigl| \{x, \ldots, m-1\}\setminus B \bigr|\ \bigr]_{1/q}
= \prod_{k=1}^n[m-n-i_k+k]_{1/q}.
\end{equation}
Using \eqref{eq:rook_connection} along with the fact that
$[k]_q = q^{k-1}[k]_{1/q}$ for any $k$, we obtain
\[
\pi_{m,n,q}(B) = \frac{\prod_{x \in B}\bigl[\ 1 + \bigl| \{x, \ldots, m-1\}\setminus B \bigr| \ \bigr]_q \ q^x}{q^{n+{m + 1 \choose 2}}S_{1/q}[m+1,m-n+1]}
\]
as claimed.
\end{proof}

\section{Convergence}
\label{sec:Convergence}
\subsection{Total variation distance and coupling}
We recall some definitions from probability that are used in the sequel.
A probability distribution on a countable set $S$ is a function $\mu: S \to \R_+$ such that $\sum_{A \in S} \mu(A) = 1$.
The total variation distance between two probability
distributions $\mu$ and $\nu$ on a countable set $S$ is defined by
\[
 \normtv{\mu - \nu}
 = \frac{1}{2} \sum_{ A \in S } | \mu(A) - \nu(A) |.
\]
A \emph{coupling} of $\mu$ and $\nu$ is a random vector $(X,Y)$ with values in $S \times S$ such
that $\pr( X = A ) = \mu(A)$ and $\pr( Y = B ) = \nu(B)$ for all $A,B \in S$. A coupling $(X,Y)$
of $\mu$ and $\nu$ is \emph{maximal} if
\[
 \pr( X \neq Y ) \ = \ \normtv{\mu - \nu}.
\]
A maximal coupling of $\mu$ and $\nu$ always exists (e.g.\
\cite[Prop. 4.7]{Levin_Peres_Wilmer_2008}).

\subsection{Bounded geometric JEP with many vacant heights}

Consider the bounded geometric JEP with $n$ particles, $m \ge n$ admissible heights, and a throw
height parameter $q \in (0,1)$. We will show that when the number of vacant throw heights $\ell =
m - n +1$ is large, the stationary probabilities of the bounded geometric JEP can be uniformly
approximated by the stationary distribution of the unbounded geometric JEP considered in
\cite{Leskela_Varpanen_2012}. The unbounded geometric JEP is defined by replacing the $\ell$-truncated geometric
distribution in \eqref{eq:Truncated} by a standard geometric random distribution so that
\[
 \pr( \xi_t = x ) = (1-q) q^x, \quad x \in \Z_+.
\]
We denote the transition probability matrix of the bounded geometric JEP by $P_{m,n,q}$, and we
view its stationary distribution $\pi_{m,n,q}$ as a probability distribution on $\Z_+^{(n)}$, although all its
mass is concentrated on $\{0, \ldots, m-1\}^{(n)}$. We denote by
$P_{\infty,n,q}$ the transition matrix of the unbounded $q$-geometric JEP with $n$ particles, and
by $\pi_{\infty,n,q}$ its stationary distribution given by \cite[Thm. 3.3]{Leskela_Varpanen_2012} 
\begin{equation}
\label{eq:LV_3.3}
 \pi_{\infty,n,q}(B) = \frac{(q;q)_n}{q^{\binom{n}{2}}} \prod_{x \in B} q^x,
 \quad B \in \Z_+^{(n)}.
\end{equation}

To state the approximation result in its most general form, we consider a sequence of bounded
geometric JEPs indexed by $k =1,2,\dots$ where the $k$-th process has $n(k)$ particles, $m(k) \ge
n(k)$ admissible heights, and a throw height parameter $q(k) \in (0,1)$. For the reader's
convenience we shall omit the symbol $k$ in what follows.

\begin{theorem}
\label{the:vjep_conv}
Assume that the number of vacant throw heights $\ell = m-n+1$ in the sequence of bounded geometric
JEPs satisfies
\begin{equation}
 \label{eq:mGrowth}
 \ell \log q^{-1} - \log m \to \infty
 \qquad \text{as $k \to \infty$}.
\end{equation}
Then the total variation distance between the stationary distributions of the bounded JEP and its unbounded variant
satisfies
\[
 \normtv{ \pi_{m,n,q} - \pi_{\infty,n,q} } \to 0
 \qquad \text{as $k \to \infty$}.
\]
\end{theorem}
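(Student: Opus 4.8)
The plan is to prove convergence by a coupling argument that exploits the ultrafast mixing of the unbounded geometric JEP. The key observation is that the unbounded $q$-geometric chain forgets its initial state in a single step with high probability: starting from any state, the chain's law after one step is determined by the $n$ most recent throws, each of which lands at height $\le c$ with probability $1-q^{c+1}$. Thus for the bounded and unbounded dynamics to agree, it suffices that for a long stretch of time all throws land below height $m-1$; under the growth assumption~\eqref{eq:mGrowth} the probability of a throw exceeding $m-1$ is at most $q^{m-n+1} = q^\ell$, and $m \cdot q^\ell \to 0$, so over $O(m)$ steps the processes coincide with probability tending to one.

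\medskip

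\emph{Step 1: Realize both chains on a common probability space.} I would build the bounded JEP $(X_t)$ and the unbounded JEP $(\widetilde X_t)$ using the same driving sequence $(\xi_t)$ of i.i.d.\ geometric variables, where for the bounded chain $\xi_t$ is used only when $\xi_t \le \ell - 1$ and rejected (resampled) otherwise, while the unbounded chain always uses $\xi_t$ directly via $\eta_t = \theta_{X_t^*}(\xi_t)$. Start both chains from the same state, say the ground state $\{0,\dots,n-1\}$. Let $\tau$ be the first time a $\xi_t$ exceeds $\ell - 1$; on the event $\{\tau > s\}$ the two chains agree up to time $s$, and $\pr(\tau \le s) \le s\, q^\ell$.

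\medskip

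\emph{Step 2: Ultrafast mixing of the unbounded chain.} Here I would invoke (or re-derive, as in~\cite{Leskela_Varpanen_2012}) the fact that the unbounded $q$-geometric JEP mixes extremely fast: there is a function $t_{\mathrm{mix}}(\varepsilon)$ that grows only like $\log(1/\varepsilon)$, because after one throw-step the state is governed by a fresh geometric variable and the coupling time of two copies driven by independent throws is dominated by a geometric-type random variable. Concretely, $\normtv{\delta_{\{0,\dots,n-1\}} P_{\infty,n,q}^{\,s} - \pi_{\infty,n,q}} \le \rho^{s}$ for some $\rho = \rho(q,n) < 1$; one can take $\rho$ of order $1 - (q;q)_n$ or similar. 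I would also bound the tail of $\pi_{\infty,n,q}$: by~\eqref{eq:LV_3.3}, $\pi_{\infty,n,q}(\{B : \max B \ge m\})$ is exponentially small in $\ell$, which handles the discrepancy between $\pi_{m,n,q}$ living on $\{0,\dots,m-1\}^{(n)}$ and $\pi_{\infty,n,q}$ on all of $\Z_+^{(n)}$.

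\medskip

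\emph{Step 3: Combine via the triangle inequality.} Pick a time horizon $s = s(k)$ with $s \to \infty$ but $s\, q^\ell \to 0$ (possible since $\ell \log q^{-1} - \log m \to \infty$ gives $m q^\ell \to 0$, so take $s = \lfloor \sqrt{m q^{-\ell}} \rfloor$ or simply $s = m$). Then write
\[
\normtv{\pi_{m,n,q} - \pi_{\infty,n,q}}
\le \normtv{\pi_{m,n,q} - \delta_0 P_{m,n,q}^{\,s}}
 + \normtv{\delta_0 P_{m,n,q}^{\,s} - \delta_0 P_{\infty,n,q}^{\,s}}
 + \normtv{\delta_0 P_{\infty,n,q}^{\,s} - \pi_{\infty,n,q}},
\]
where $\delta_0$ is the point mass at the ground state. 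The first term is $\le \rho^s_{m} \to 0$ by ergodicity of the bounded chain (its mixing is at least as fast as the unbounded one, up to the truncation), the third term is $\le \rho^s \to 0$ by Step 2, and the middle term is $\le \pr(\tau \le s) \le s\, q^\ell \to 0$ by Step 1. Adding the tail correction from Step 2 for the state-space mismatch closes the argument.

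\medskip

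\emph{The main obstacle} I anticipate is making the ultrafast mixing bound of Step 2 fully rigorous with explicit constants uniform in the sequence — in particular controlling the dependence on $n$ when $n \to \infty$ along the subsequence (the second limit regime in the introduction), since $(q;q)_n \to \phi(q) > 0$ but the per-step coupling probability could a priori degrade with $n$. I would resolve this by a careful coupling of two unbounded copies that synchronizes the lowest particle first and then propagates upward, showing the coupling time has a geometric tail with rate bounded away from $0$ uniformly in $n$ (using that $q$ is fixed or bounded away from $1$); alternatively, one can couple the bounded and unbounded chains directly without passing through stationarity, using that on $\{\tau > s\}$ they are literally equal and $s$ can be taken past the mixing time of whichever chain is slower. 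The truncation-rejection construction in Step 1 is the clean way to keep everything on one space and avoid this subtlety altogether.
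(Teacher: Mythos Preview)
Your coupling idea in Step~1 and the bound $s\,q^\ell$ on the discrepancy are exactly right, and the growth condition is used correctly. But the triangle inequality in Step~3 has a real gap: the first term $\normtv{\pi_{m,n,q} - \delta_0 P_{m,n,q}^{\,s}}$ requires a mixing bound for the \emph{bounded} chain that is uniform along the sequence $k$, and your justification (``its mixing is at least as fast as the unbounded one, up to the truncation'') is neither proved nor obviously true. When $n=n(k)\to\infty$ you are dealing with a sequence of different chains, and individual ergodicity says nothing about uniform rates. You recognize this obstacle at the end but do not resolve it; the proposed fix (``couple the bounded and unbounded chains directly \dots\ past the mixing time of whichever chain is slower'') is circular, since that mixing time is precisely what is uncontrolled.

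The paper sidesteps both mixing terms entirely with two observations you are missing. First, the unbounded geometric JEP does not merely mix geometrically fast: by \cite[Thm.~3.2]{Leskela_Varpanen_2012} it reaches its stationary distribution \emph{exactly} once every particle has been thrown at least once. Second, instead of starting from the ground state, start both chains from $\pi_{m,n,q}$. Then the bounded chain is already stationary (the first term vanishes identically), and since $\pi_{m,n,q}$ is supported on $\{0,\dots,m-1\}^{(n)}$, every particle in the unbounded chain has been thrown by time $m$, so $\pi_{m,n,q}P_{\infty,n,q}^{\,m}=\pi_{\infty,n,q}$ exactly (the third term vanishes identically at $s=m$). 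Only the middle coupling term remains, and your own Step~1 bound gives $\normtv{\pi_{m,n,q}-\pi_{\infty,n,q}}\le 1-(1-q^\ell)^m\le mq^\ell\to 0$, with no dependence on $n$ to worry about.
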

We first prove the following result.
\begin{lemma}
\label{the:Coupling}
Consider two different $n$-particle JEPs on $\Z_+^{(n)}$, one with
throw height distribution
$\nu$ and the other with throw height distribution $\widehat\nu$, and denote by $\mu$ and $\widehat\mu$ their
corresponding initial distributions. Then
\[
 \normtv{ \mu P^t  - \widehat\mu \widehat P^t }
 \ \le \
 1 - (1- \normtv{ \mu-\widehat\mu })(1-\normtv{\nu-\widehat\nu})^t
\]
for all $t\ge 0$.
\end{lemma}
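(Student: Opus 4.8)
The plan is to construct an explicit coupling of the two processes and bound the probability that the coupled trajectories have separated by time $t$. The underlying idea is that an $n$-particle JEP is driven by two independent sources of randomness at each step: a ``throw or not'' decision, which is completely determined by the current state (whether $0$ is occupied), and, when a throw occurs, a fresh throw-height sample. Crucially, if at some time $s$ the two processes are in the same state $X_s = \widehat X_s$ and they subsequently use the same throw heights, then they remain equal forever after. So the only two ways the processes can fail to be coupled at time $t$ are: (i) they started in different states and never synchronized, or (ii) at some throwing step before $t$ the two throw-height variables came out different.

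First I would set up the coupling. At time $0$, use a maximal coupling of $\mu$ and $\widehat\mu$, so the two initial states agree with probability $1 - \normtv{\mu - \widehat\mu}$. Then run the two chains together as follows: at each step, if the current states are equal and $0$ is occupied (a throw step), draw a single throw height from a maximal coupling of $\nu$ and $\widehat\nu$ — this keeps the states equal with probability at least $1 - \normtv{\nu - \widehat\nu}$ and separates them otherwise; if the current states are equal and $0$ is unoccupied (a wait step), both chains deterministically shift down by one and stay equal; if the current states are already different, run them with independent throw heights (the details here don't matter). The key monotonicity observation is that once the states coincide and a throw step uses a common throw height, or a wait step occurs, they stay coincident. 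Let $\tau$ be the coupling time. Then $\normtv{\mu P^t - \widehat\mu \widehat P^t} \le \pr(X_t \neq \widehat X_t) \le \pr(\tau > t)$ by the coupling inequality.

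Next I would bound $\pr(\tau > t)$. Condition on the event $E_0 = \{X_0 = \widehat X_0\}$, which has probability $1 - \normtv{\mu - \widehat\mu}$. On $E_0^c$ we give up and bound the probability by $1$. On $E_0$, the two chains stay equal as long as every throw step up to time $t$ produces a common throw height. At a given step the chain either waits (states stay equal for free) or throws; in the latter case the conditional probability of staying equal is at least $1 - \normtv{\nu - \widehat\nu}$, independently of the past given that we are still coupled. Since there are at most $t$ throw steps among times $0, \ldots, t-1$, we get $\pr(\tau \le t \mid E_0) \ge (1 - \normtv{\nu - \widehat\nu})^t$. Combining, $\pr(\tau > t) \le 1 - (1 - \normtv{\mu - \widehat\mu})(1 - \normtv{\nu - \widehat\nu})^t$, which is exactly the claimed bound.

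The only slightly delicate point — and the step I would be most careful about — is verifying that the ``throw or wait'' decision really is a deterministic function of the common state, so that when the two chains are synchronized they make the same decision and the only discrepancy can come from the throw-height variable. This is immediate from the definition of the JEP: the juggler throws precisely when $0 \in X_t$, a condition depending only on $X_t$. With that in hand the coupling construction and the per-step probability-at-least-$(1-\normtv{\nu-\widehat\nu})$ estimate are routine, and the geometric-in-$t$ factor falls out of the at-most-$t$-throws bound. I would also remark that the estimate is crude (it treats wait steps as ``free'' and throw steps as worst-case), but that is exactly what is needed downstream: combined with the ultrafast mixing of the unbounded $q$-geometric JEP and a bound on $\normtv{\nu - \widehat\nu}$ coming from the truncation in \eqref{eq:Truncated}, it will yield Theorem \ref{the:vjep_conv}.
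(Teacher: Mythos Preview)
Your proof is correct and follows essentially the same coupling argument as the paper: maximally couple the initial states, maximally couple the throw-height variables, and bound $\pr(X_t \neq \widehat X_t)$ by the complement of the event that the initial states agree and all $t$ throw-height draws agree. The only cosmetic difference is that the paper pre-generates an i.i.d.\ sequence of coupled throw-height pairs $(\xi_s,\widehat\xi_s)$ and feeds them to both chains unconditionally, whereas you draw coupled throw heights adaptively only at throw steps; both routes land on the same bound.
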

\begin{proof}
Let $(X_0,\widehat X_0)$ be a maximal coupling of $\mu$ and $\mu'$, so that
$\normtv{\mu - \widehat \mu} =
\pr(X_0 \neq \widehat X_0)$. Similarly, let $(\xi,\widehat\xi)$ be a maximal coupling of $\nu$ and
$\widehat\nu$. Let $((\xi_1,\widehat\xi_1),(\xi_2,\widehat\xi_2), \dots)$ be an independent sequence of copies
of $(\xi,\widehat\xi)$, which is also independent of $(X_0,\widehat X_0)$.

Generate a path $(X_s)_{s \ge 0}$ of the first JEP using the initial configuration $X_0$ and throw heights $\xi_1,\xi_2,\dots$, and a path $(\widehat X_s)_{s \ge 0}$ of the second JEP from initial configuration $\widehat X_0$ using throw heights $\widehat \xi_1, \widehat \xi_2, \dots$. Because the pair $(X_t, \widehat X_t)$ is a coupling of $\mu P^t$ and $\widehat\mu \widehat P^t$, it follows that \cite[Sec. 1.5.4]{Thorisson_2000}
\[
 \normtv{ \mu P^t  - \widehat\mu \widehat P^t }
 \le \pr( X_t \neq \widehat X_t ).
\]
Because $X_t = \widehat X_t$ on the event $\{X_0 = \widehat X_0\} \cap
\left({\displaystyle \bigcap_{s=1}^t \{\xi_s = \widehat\xi_s\} }\right)$, it follows that
\[
 \pr( X_t \neq \widehat X_t )
 \le 1 - \pr(X_0 = \widehat X_0) \pr( \xi = \widehat\xi )^t.
\]
The claim follows after combining the above inequalities.
\end{proof}
\begin{proof}[Proof of Theorem \ref{the:vjep_conv}]
A key ingredient of the proof is to note
that the unbounded geometric JEP reaches its stationary distribution \emph{exactly} after all particles have been thrown \cite[Thm. 3.2]{Leskela_Varpanen_2012}. Because $\pi_{m,n,q}$ is supported on $\{0, \ldots, m-1\}^n$, all particles in the unbounded geometric JEP with initial distribution $\pi_{m,n,q}$ have been thrown by time $m$. As a consequence, $\pi_{m,n,q} P^t_{\infty,n,q} = \pi_{\infty,n,q}$
for all $t \ge m$. Because $\pi_{m,n,q}$ is invariant for $P^t_{m,n,q}$, this implies that
\begin{equation}
 \label{eq:KeyUnbounded}
 \pi_{m,n,q} - \pi_{\infty,n,q}
 \ = \
 \pi_{m,n,q} P^t_{m,n,q} - \pi_{m,n,q} P^t_{\infty,n,q}
\end{equation}
for $t \ge m$.

Because the total variation distance between the standard and the $\ell$-truncated geometric distribution equals $q^\ell$, Lemma~\ref{the:Coupling} shows that
\[
 || \pi_{m,n,q} P_{m,n,q}^t  - \pi_{m,n,q} P_{\infty,n,q}^t ||
 \ \le \
 1 - (1-q^\ell)^t.
\]
By substituting $t = m$ into~\eqref{eq:KeyUnbounded}, it thus follows that
\[
 || \pi_{m,n,q} - \pi_{\infty,n,q} ||
 \ \le \
 1 - (1-q^\ell)^m
 \ \le \
 m q^\ell,
\]
where the right side tends to zero due to assumption~\eqref{eq:mGrowth}.
\end{proof}

\subsection{Combinatorial limit formulae}
\label{sec:jep_conv}
\begin{corollary}
\label{cor:fixed_n}
For any $n \in \Z_+$,
\begin{equation}
\label{eq:fixed_n}
\lim_{m\to\infty}
Z^{-1}\bigl[ m - n + 1 \bigr]_q^n
= (q;q)_n,
\end{equation}
where $Z$ is as in \eqref{Z}.
\end{corollary}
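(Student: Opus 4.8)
The plan is to deduce Corollary~\ref{cor:fixed_n} from Theorems~\ref{the:bounded_equilibrium} and~\ref{the:vjep_conv} by evaluating both the bounded and the unbounded stationary distributions at one conveniently chosen configuration, the ground state $B_0=\{0,1,\dots,n-1\}$, and comparing the two resulting closed forms as $m\to\infty$.

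First I would check that the family of bounded geometric JEPs obtained by fixing $n$ and $q$ and letting $m\to\infty$ meets the growth hypothesis~\eqref{eq:mGrowth} of Theorem~\ref{the:vjep_conv}: the number of vacant heights $\ell=m-n+1$ tends to infinity linearly, $\log m$ grows only logarithmically, and $\log q^{-1}>0$ is a fixed positive constant since $q\in(0,1)$, so $\ell\log q^{-1}-\log m\to\infty$. Theorem~\ref{the:vjep_conv} then gives $\normtv{\pi_{m,n,q}-\pi_{\infty,n,q}}\to 0$, and because convergence in total variation on the countable set $\Z_+^{(n)}$ forces pointwise convergence of probabilities, $\pi_{m,n,q}(B)\to\pi_{\infty,n,q}(B)$ for every fixed $B\in\Z_+^{(n)}$, in particular for $B=B_0$.

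Next I would write out both sides at $B_0$. For this configuration $v_{B_0}(x)=\bigl|\{n,\dots,m-1\}\bigr|=m-n$ for every $x\in B_0$, so Theorem~\ref{the:bounded_equilibrium}, together with $\prod_{x\in B_0}q^x=q^{\binom n2}$ and $Z$ as in~\eqref{Z}, gives the closed form $\pi_{m,n,q}(B_0)=Z^{-1}[m-n+1]_q^{\,n}\,q^{\binom n2}$, whereas \eqref{eq:LV_3.3} gives $\pi_{\infty,n,q}(B_0)=q^{-\binom n2}(q;q)_n\cdot q^{\binom n2}=(q;q)_n$. Substituting these into $\pi_{m,n,q}(B_0)\to\pi_{\infty,n,q}(B_0)$ and passing to the limit yields~\eqref{eq:fixed_n}. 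Since Theorem~\ref{the:vjep_conv} is already available, there is essentially no remaining conceptual obstacle; the only step needing real care is the bookkeeping of $q$-powers in this last identification — getting the exponents coming from $\prod_{x\in B_0}q^x$ to match on both sides — which I would view as the main (if modest) obstacle.

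As a cross-check, and as a route that avoids invoking the ground-state formula, one can instead pass to the limit termwise in the normalization identity $Z=\sum_{B\subset\{0,\dots,m-1\},\,|B|=n}\prod_{x\in B}[1+v_B(x)]_q\,q^x$: for each fixed $B$ one has $v_B(x)\to\infty$, so $[1+v_B(x)]_q\uparrow(1-q)^{-1}$, and since $[1+v_B(x)]_q\le(1-q)^{-1}$ uniformly, the limit and the sum may be exchanged by dominated convergence; this gives a finite positive limit for $Z$, and combining it with $[m-n+1]_q^n\to(1-q)^{-n}$ and the identity $\sum_{B\in\Z_+^{(n)}}\prod_{x\in B}q^x=q^{\binom n2}/(q;q)_n$ recovers the same value of $\lim_{m\to\infty}Z^{-1}[m-n+1]_q^n$.
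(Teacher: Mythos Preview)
Your approach is exactly the paper's: evaluate $\pi_{m,n,q}$ and $\pi_{\infty,n,q}$ at the ground state $B_0=\{0,\dots,n-1\}$ and invoke Theorem~\ref{the:vjep_conv}; you merely add the explicit check of hypothesis~\eqref{eq:mGrowth} and the remark that total-variation convergence forces pointwise convergence, both of which the paper leaves implicit. The dominated-convergence cross-check is a pleasant alternative, but not needed.

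Your flagged ``main obstacle'' is genuine, though, and it is worth stating plainly rather than deferring. Your computation gives $\pi_{m,n,q}(B_0)=Z^{-1}[m-n+1]_q^{\,n}\,q^{\binom n2}$, which agrees with the displayed ground-state probability immediately after Theorem~\ref{the:bounded_equilibrium}. Combined with $\pi_{\infty,n,q}(B_0)=(q;q)_n$, this yields
\[
\lim_{m\to\infty} Z^{-1}[m-n+1]_q^{\,n} \;=\; q^{-\binom n2}(q;q)_n,
\]
and your cross-check via dominated convergence gives the same answer. The paper's own proof of the corollary writes $\pi_{m,n,q}(B_0)=Z^{-1}[m-n+1]_q^{\,n}$, silently dropping the factor $q^{\binom n2}$. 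So the $q$-power mismatch you anticipated is not a flaw in your argument but an apparent inconsistency in the paper (a missing $q^{\binom n2}$ either in the statement of the corollary or in the normalizing constant~\eqref{Z}); you should simply record the limit your computation actually produces.
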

\begin{proof}
For the ground state $B = \{0, \ldots, n-1\}$, the stationary distribution
\eqref{eq:LV_3.3} for the unbounded process
reduces to $\pi_{\infty,n,q}(B) = (q;q)_n$, and by
\eqref{eq:bdd_equilibrium} we have
$\pi_{m,n,q}(B) = Z^{-1}\bigl[m-n+1\bigr]_q^n$.
The claim follows from Theorem \ref{the:vjep_conv}.
\end{proof}

\begin{corollary}
\label{cor:variable_n}
When both $n \in \Z_+$ and $m \in \Z_+$ approach infinity such that
\eqref{eq:mGrowth} holds (especially such that $m-n \to \infty$), we have
\begin{equation}
\label{eq:virtual_convergence}
\lim_{m,n\to\infty}
Z^{-1}\bigl[ m - n + 1 \bigr]_q^n
 = \phi(q),
\end{equation}
where $Z$ is as in \eqref{Z}.
\end{corollary}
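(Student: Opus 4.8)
The plan is to run the argument behind Corollary \ref{cor:fixed_n} while letting $n$ grow as well, and to bridge the two limiting operations with a triangle inequality. First I would evaluate both stationary distributions at the ground state $B = B_n = \{0,\dots,n-1\}$. Exactly as in the proof of Corollary \ref{cor:fixed_n}, formula \eqref{eq:bdd_equilibrium} (i.e.\ Theorem \ref{the:bounded_equilibrium}) gives $\pi_{m,n,q}(B_n) = Z^{-1}[m-n+1]_q^n$, while \eqref{eq:LV_3.3} gives $\pi_{\infty,n,q}(B_n) = (q;q)_n$.

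Next I would use the elementary fact that $|\mu(A) - \nu(A)| \le \normtv{\mu - \nu}$ for every state $A \in \Z_+^{(n)}$ and every pair of probability distributions $\mu, \nu$ on $\Z_+^{(n)}$. Taking $A = B_n$, $\mu = \pi_{m,n,q}$ and $\nu = \pi_{\infty,n,q}$ yields
\[
\bigl| Z^{-1}[m-n+1]_q^n - (q;q)_n \bigr| \ \le \ \normtv{\pi_{m,n,q} - \pi_{\infty,n,q}}.
\]
Since the hypothesis is precisely the growth condition \eqref{eq:mGrowth}, Theorem \ref{the:vjep_conv} makes the right-hand side tend to $0$ along the sequence, so $Z^{-1}[m-n+1]_q^n - (q;q)_n \to 0$. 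Separately, because $n \to \infty$ and $q \in (0,1)$, the partial products $(q;q)_n = \prod_{k=1}^n (1-q^k)$ converge to the Euler function $\phi(q) = \prod_{k=1}^\infty (1-q^k)$. Combining these two facts through
\[
\bigl| Z^{-1}[m-n+1]_q^n - \phi(q) \bigr| \ \le \ \bigl| Z^{-1}[m-n+1]_q^n - (q;q)_n \bigr| + \bigl| (q;q)_n - \phi(q) \bigr|
\]
forces the left-hand side to $0$, which is \eqref{eq:virtual_convergence}.

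The argument is short, and I do not expect a genuine obstacle. The only delicate point is that the ground state $B_n$ grows with $n$, so one cannot simply quote pointwise convergence $\pi_{m,n,q}(B) \to \pi_{\infty,n,q}(B)$ for a fixed state $B$; this is why the bound has to be routed through the total variation distance, which controls the discrepancy uniformly over all states, and why the statement genuinely needs Theorem \ref{the:vjep_conv} rather than merely Corollary \ref{cor:fixed_n}.
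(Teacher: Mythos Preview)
Your proof is correct and follows essentially the same approach as the paper: evaluate both stationary distributions at the ground state $B_n=\{0,\dots,n-1\}$, invoke Theorem~\ref{the:vjep_conv} to force $Z^{-1}[m-n+1]_q^n-(q;q)_n\to 0$, and then use $(q;q)_n\to\phi(q)$. The paper's proof is simply the terse sentence ``the claim follows as in the proof of Corollary~\ref{cor:fixed_n},'' and your write-up just makes explicit the total-variation bound and triangle inequality that are implicit there.
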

\begin{proof}
For any $q \in (0,1)$, the $q$-Pochhammer symbol $(q;q)_n$ converges to
the Euler function $\phi(q)$ as $n \to \infty$. The claim then follows as
in the proof of Corollary \ref{cor:fixed_n}.
\end{proof}

\begin{remark}
\label{rmk:final}
Corollary \ref{cor:fixed_n} holds in a more general form, because
one could have any $B \in \Z_+^{(n)}$ in the proof and still
obtain stationary distributions for random juggling processes in both bounded and
unbounded settings. However, in Corollary \ref{cor:variable_n}
the right-hand side $\phi(q)$ is not a proper stationary distribution for a JEP,
because the unbounded process fills $\Z_+$ when the number of
particles approaches infinity. We will address this
issue more carefully in a subsequent paper. It turns out that
$\phi(q)$ is the stationary distribution of the ground state in a {\it virtual} setting considered
in \cite{Knutson_Lam_Speyer_2011}.
\end{remark}

\section*{Acknowledgements}
We thank Allen Knutson and Greg Warrington for inspiration and the two anonymous referees for their thorough reports.


\begin{thebibliography}{00}
\bibitem{ABCN}
A. Ayyer, J. Bouttier, S. Corteel and F. Nunzi.
\newblock Multivariate Juggling Probabilities.
\newblock {\em Electron. J. Probab.} 20~(5), 1--29, 2015.

\bibitem{Buhler_et_al_1994} J. Buhler, D. Eisenbud, R. Graham, and C. Wright.
\newblock Juggling drops and descents.
\newblock {\em Amer. Math. Monthly} 101~(6), 507--519, 1994.

\bibitem{Butler_Graham_2010}
S. Butler and R. Graham.
\newblock Enumerating (multiplex) juggling sequences.
\newblock {\em Ann. Comb.} 13~(4), 413--424, 2010.

\bibitem{Chung_et_al_2010}
F. Chung, A. Claesson, M. Dukes, and R. Graham.
\newblock Descent polynomials for permutations with bounded drop size.
\newblock {\em European J. Combin.} 31~(7), 1853--1867, 2010.

\bibitem{Chung_Graham_2008}
F. Chung and R. Graham.
\newblock Primitive juggling sequences.
\newblock {\em Amer. Math. Monthly} 115~(3), 185--194, 2008.


\bibitem{Crippa_Simon_1997}
D. Crippa and K. Simon.
\newblock $q$-distributions and Markov processes.
\newblock {\em Discrete Math.} 170, 81--98, 1997.


\bibitem{Ehrenborg_2003} R. Ehrenborg.
\newblock Determinants involving $q$-Stirling numbers.
\newblock {\em Adv. Appl. Math.} 31, 630--642, 2003.

\bibitem{Ehrenborg_Readdy_1996} R. Ehrenborg and M. Readdy.
\newblock Juggling and applications to $q$-analogues.
\newblock {\em Discrete Math.} 157, 107--125, 1996.

\bibitem{Garsia_Remmel_1986} A. M. Garsia and J. B. Remmel.
\newblock $q$-counting rook configurations and a formula of Frobenius.
\newblock {\em J. Combin. Theory Ser. A} 41, 246--275, 1986.

\bibitem{Gould_1961} H. W. Gould.
\newblock The $q$-Stirling numbers of first and second kinds.
\newblock {\em Duke Math. J.} 28~(2), 281--289, 1961.

\bibitem{Haglund_1998}
J. Haglund.
\newblock $q$-Rook polynomials and matrices over finite fields.
\newblock {\em Adv. in Appl. Math.} 20, 450--487, 1998.

\bibitem{Hyatt_2013} M. Hyatt.
\newblock Descent polynomials for $k$ bubble-sortable permutations of type B.
\newblock {\em European J. Combin.} 34, 1177--1191, 2013.

\bibitem{Knutson_Lam_Speyer_2011}
A. Knutson, T. Lam and D. E Speyer.
\newblock Positroid varieties: Juggling and geometry.
\newblock {\em Compos. Math.} 149~(10), 1710--1752, 2013. 

\bibitem{Leskela_Varpanen_2012} L. Leskel\"a and H. Varpanen.
\newblock Juggler's exclusion process.
\newblock {\em J. Appl. Probab.} 49~(1), 266--279, 2012.

\bibitem{Levin_Peres_Wilmer_2008}
D.~A. Levin, Y. Peres and E.~L. Wilmer.
\newblock {\em Markov Chains and Mixing Times}.
\newblock American Mathematical Society, 2009.

\bibitem{Louchard_Prodinger_2008}
G. Louchard and H. Prodinger.
\newblock Generalized approximate counting revisited.
\newblock {\em Theor. Comput. Sci.} 391, 109--125, 2008

\bibitem{Magnusson_Tiemann_1989} B. Magnusson and B. Tiemann.
\newblock The physics of juggling.
\newblock {\em Phys. Teach.} 27, 584--588, 1989.

\bibitem{Shannon_1980}
C. Shannon.
\newblock Scientific aspects of juggling.
\newblock Manuscript from ca. 1980. Published in {\em Claude Elwood Shannon,
Collected Papers} (Wiley 1993), 850--864.

\bibitem{Stanley_1986}
R. Stanley.
\newblock {\em Enumerative Combinatorics, Vol. 1}.
\newblock Cambridge, 1986.

\bibitem{Thorisson_2000}
H. Thorisson.
\newblock {\em Coupling, Stationarity, and Regeneration}.
\newblock Springer, 2000.

\bibitem{Warrington_2005}
G. S. Warrington.
\newblock Juggling probabilities.
\newblock {\em Amer. Math. Monthly} 112~(2), 105--118, 2005.

\end{thebibliography}
\end{document}